\newcommand{\R}{\mathbb{R}}
\theoremstyle{plain}
\newtheorem{theorem}{Theorem}[section]
\newtheorem{proposition}[theorem]{Proposition}
\newtheorem{corollary}[theorem]{Corollary}
\theoremstyle{definition}
\newtheorem{example}[theorem]{Example}
\theoremstyle{remark}
\newtheorem{remark}[theorem]{Remark}
\newcommand{\tos}{\rightrightarrows} 
\DeclareMathOperator*{\argmin}{arg\,min}
\DeclareMathOperator*{\co}{co}
\DeclareMathOperator*{\nep}{NEP}
\DeclareMathOperator*{\gnep}{GNEP}
\DeclareMathOperator*{\psgnep}{PSGNEP}
\DeclareMathOperator{\gra}{gra}
\title{Remarks on projected solutions for  generalized Nash games}
\author{ 
Carlos Calder\'on 
\thanks{Instituto de Matem\'atica y Ciencias Afines. Lima, Per\'u. Email: carlos.calderon@imca.edu.pe} 
\and
John Cotrina  
\thanks{Universidad del Pac\'ifico, Lima, Per\'u. Email: cotrina\_je@up.edu.pe} 
}
\begin{document}

\maketitle

\begin{abstract}
In this work, we focus on the concept of projected solutions for generalized Nash equilibrium problems. We present new existence results by considering sets of strategies that are not necessarily compact. The relationship between projected solutions and Nash equilibria is studied for the generalized Nash game proposed by Rosen. Finally, we demonstrate that every projected solution of a game is associated with a Nash equilibrium, but in a different game.
\bigskip

\noindent{\bf Keywords:  Generalized Nash games, Shared constraints, Projected solution} 

\bigskip

\noindent{{\bf MSC (2010)}:  91A10, 91B50, 91A99} 

\end{abstract}

\section{Introduction}\label{intro}

Nash games \cite{Nash} focus on the strategic interaction between two or more players, where each player chooses a strategy and seeks to maximize their own outcome, taking into account the choices of the other players. A Nash equilibrium is reached when no player can achieve a better outcome by changing their strategy, provided that the other players maintain their strategies.  Thus, the importance of finding suitable assumptions in order to guarantee the existence of Nash equilibria was taken into account for many researchers, see for instance \cite{Dasgupta,MORGAN2007,Rabia2016,Reny,Parilina2022-oi} and their references.

On the other hand, the generalized Nash equilibrium problem was first proposed by Arrow and Debreu \cite{Arrow-Debreu}, who referred to it as an "abstract economy." These generalized games extend the focus of Nash games by considering more general scenarios where players can have different sets of available strategies and different preferences over outcomes. In these games, the objective is to find a generalized equilibrium that takes into account the constraints and individual preferences of the players. The concept of generalized equilibrium has been fundamental in economic theory in analyzing the efficient allocation of resources and the maximization of social welfare \cite{Debreu, K-2005, K-2016, CK-2015, Vardar2019}. A particular generalized Nash game was introduced by Rosen in \cite{Rosen}, but it was in the 1990s that many authors began addressing these generalized games in order to establish sufficient conditions to guarantee the existence of generalized Nash equilibria \cite{HARKER199181, Cavazzuti, FACCHINEI2007159, Aussel-Dutta, AVS21, ASV-2016, AGM-16, CHS-2020, CZ-2018}.

Currently, generalized Nash games are being used by researchers to model electricity markets, as seen in \cite{ASV-2016, Le_Cadre2019-xc}. However, in 2016, Aussel, Sultana, and Vetrivel \cite{ASV-2016} showed that these electricity market problems may not have generalized Nash equilibria because the strategies of each player depend on the strategies of their rivals, but these strategies do not necessarily fall within a fixed set of strategies. This absence of generalized equilibria has led to the introduction of a new concept called projected solution for generalized Nash equilibrium problems. In addition, the authors in \cite{ASV-2016} reformulated the generalized Nash game as a quasi-variational inequality problem to obtain projected solutions, assuming convexity and differentiability. Later, Cotrina and Z\'u\~niga \cite{CZ-2019} extended the result given in \cite{ASV-2016} by considering continuity instead of differentiability, by reformulating these generalized games as quasi-equilibrium problems. Similarly, Castellani et al. \cite{CGL-2023} extended the main result in \cite{CZ-2019} by relaxing the compactness assumption of each strategy set. However, all the above results require the convexity assumption. Recently, Bueno and Cotrina \cite{BC-2021} established an existence result on projected solutions in the setting of quasi-convexity, considering a weak notion of continuity. Moreover, in \cite{BC-2021} the authors showed that both the quasi-variational inequality problem and the quasi-equilibrium problem can be reformulated as a certain generalized Nash equilibrium problem. The aim of this manuscript is to prove the existence of projected solutions for generalized Nash games in the setting of quasi-convexity, which is independent of the one given in \cite{BC-2021}. Moreover, we aim to study the relationship between projected solutions and generalized Nash equilibria, first for the generalized Nash game proposed by Rosen and later for the general case.

The paper is organized as follows. In Section \ref{preli}, we provide some definitions and notations. Section \ref{Main-section} is divided into three subsections. In the first subsection, we establish two existence results on projected solutions. In the second subsection, we focus on the generalized Nash game proposed by Rosen. Finally, in the third subsection, we reformulate the problem of finding projected solutions as a certain generalized Nash game.

\section{Preliminaries}\label{preli}
From now on $\|\cdot\|$ denotes a norm in $\R^n$. Given a subset $A$ of $\R^n$, we denote by $\co(A)$ the convex hull of $A$ and by $\overline{A}$ the closure of $A$.
For each $z\in \R^n$, we denote by $P_A(z)$ the projection of $z$ onto $A$, that is
\[
P_A(z)=\{w\in A:~\|z-w\|\leq \|z-x\|\mbox{ for all }x\in A\}.
\]

We now recall continuity notions for set-valued maps. 
Let $T: X\tos Y$ be a set-valued map with $X$ and $Y$ two topological spaces.
The map $T$ is said  to be \emph{closed}, when $\gra(T):=\big\{(x,y)\in X\times Y\::\: y\in T(x)\big\}.$ is a closed subset of $X\times Y$. Moreover, the map $T$ is \emph{lower semicontinuous} at $x\in X$ if for each open set $V$ such that $T(x_0)\cap V\neq\emptyset$ there exists $\mathscr{V}_x$ neighbourhood of $x$ such that $T(x')\cap V\neq\emptyset$ for every $x'\in \mathscr{V}_x$; it is \emph{upper semicontinuous} at $x\in X$ if  for each open set $V$, with $T(x)\subset V$,  there exists $\mathscr{V}_x$ neighbourhood of $x$ such that $T(x') \subset V$ for every $x'\in \mathscr{V}_x$. Finally, we say that the map $T$ is  \emph{continuous} when it is upper and lower semicontinuous.

It is known that the projection onto $A\subset\R^n$, $P_A$, defines a set-valued map from $\R^n$ onto $A$. 
 
\, 
 
We recall the notion of pseudo-continuity \cite{MORGAN2007} for functions.
A real-valued function $f:X\to\R$, where $X$ is a topological space, is said to be \emph{upper pseudo-continuous} if, for any $x,y\in X$  such that $f(x)<f(y)$, there exists a neighbourhood $\mathscr{V}_x$ of $x$ satisfying
\[
f(x')<f(y),\mbox{ for all }x'\in V_x.
\]
Moreover, the function $f$ is \emph{lower pseudo-continuous} if, $-f$ is upper pseudo-continuous. Finally, $f$ is said to be  \emph{pseudo-continuous} if, it is lower and upper pseudo-continuous.

It is important to notice that any upper semi-continuous function is upper pseudo-continuous, but the converse is not true in general, see \cite{Co21} and its references for more details on pseudo-continuity.  

\section{The generalized Nash equilibrium problem}\label{Main-section}

The Nash equilibrium problem (NEP in short) \cite{Nash} consists of a finite number of players, where each player has a strategy set and an objective function depending not only on his/her decision but also on the decision of his/her rivals. Formally, 
let $N$ be the set of players which is any finite and non-empty set.
Let us assume that each player 
$\nu \in N$ chooses a strategy $x^\nu $ in a strategy set $K_{\nu}$, which is a subset of  $\R^{n_\nu}$. We denote by $\R^n$, $K$ and $K_{-\nu}$ the Cartesian products of $\prod_{\nu\in N}\R^{n_\nu}$,  $\prod_{\nu\in N} K_{\nu}$ and $\prod_{\mu\in N\setminus\{\nu\}} K_{\mu}$, respectively. We can write $x = (x^\nu, x^{-\nu}) \in K$ in order to emphasize the strategy of player $\nu$, $x^\nu\in K_\nu$, and the strategy of the other players $x^{-\nu}\in K_{-\nu}$.

Given the strategy the players except of player $\nu$, $x^{-\nu}$, the player $\nu$ chooses a strategy $x^\nu$ such that it solves the following optimization problem  
\begin{align}\label{NEP}
\min \theta_\nu(z^\nu,x^{-\nu}),~\mbox{ subject to }~z^\nu\in K_\nu,
\end{align}
where $\theta_\nu: \R^n\to\R$ is a real-valued function and  $\theta_\nu(x^\nu,x^{-\nu})$ denotes the loss of the player $\nu$ suffers when the rival players have chosen 
the strategy $x^{-\nu}$ and he/she takes $x^\nu$.  Thus, a  {\em Nash equilibrium}  is a vector $\hat{x}\in K$ such that $\hat{x}^\nu$ solves \eqref{NEP} when the rival players take the strategy $\hat{x}^{-\nu}$, for any $\nu$.  We denote by $\nep(\{\theta_\nu,K_\nu\}_{\nu\in N})$ the set of Nash equilibria. 

\,

Arrow and Debreu \cite{Arrow-Debreu} dealt with a more complex situation where the strategy set of each player also depends on the decision of his/her rivals. Nowadays, these kind of games are called the generalized Nash equilibrium problem (GNEP in short). Thus, in a GNEP  each player $\nu$ has a strategy must belong to a set $X_\nu(x)\subset K_\nu$ that depends of all strategies. The aim of player $\nu$, given the others players' strategies $x^{-\nu}$, is to choose a strategy $x^\nu$ such that  it solves the next minimization problem
\begin{align}\label{GNEP}
\min\theta_\nu(z^\nu,x^{-\nu}),~\mbox{ subject to }~z^\nu\in X_\nu(x).
\end{align}
Thus, a vector $\hat{x}\in K$ is a \emph{generalized Nash equilibrium} if, $\hat{x}^\nu$ solves \eqref{GNEP} when the rival players take the strategy $\hat{x}^{-\nu}$, for any $\nu$. We denote by $\gnep(\{\theta_\nu,X_\nu\}_{\nu\in N})$ the set of
generalized Nash equilibria. 

It is clear that $\hat{x}\in \gnep(\{\theta_\nu,X_\nu\}_{\nu\in N})$ if, and only if, $\hat{x}\in\nep(\{\theta_\nu,X_\nu(\hat{x})\}_{\nu\in N})$.
Furthermore, observe that for a GNEP, the constraint maps $X_\nu:K\tos K_\nu$ induce a set-valued map $\mathcal{X}:K\tos K$ defined as
\[
\mathcal{X}(x)=\prod_{\nu\in N}X_\nu(x),
\]
which is a self-map, i.e. $\mathcal{X}(K)\subset K$. Consequently, any generalized Nash equilibrium is a fixed point of $\mathcal{X}$. 

\,

Aussel \emph{et al.}  \cite{ASV-2016} considered the more general situation, they assume that each constraint map $X_\nu$ is defined from $K$ onto $\R^{n_\nu}$ instead $K_\nu$. In this case, a vector $\hat{x}\in K$ is said to be a projected solution if, there exists $\hat{y}\in \mathcal{X}(\hat{x})$ such that the following two conditions hold:
\begin{enumerate}
\item $\hat{x}$ is a projection of $\hat{y}$ onto $K$;
\item $\hat{y}\in \nep(\{\theta_\nu, X_\nu(\hat{x})\}_{\nu\in N})$.
\end{enumerate}
Clearly, any generalized Nash equilibrium is a projected solution, but the converse is not true, see Remark 3.1, part 2 in \cite{BC-2021}.

\,

We divide this section in three parts, the first one is related to the existence of projected solutions, the second one is concerning to the generalized Nash game proposed by Rosen, and finally the third subsection consists to reformulate the problem of finding projected solutions to a particular GNEP.
\subsection{Existence result}

Before to establish our first result we need the following proposition, which is a consequence of the maximum theorem.
\begin{proposition}\label{ApplyBerge}
Let $X,Y,Z$ be three topological spaces, $T:X\tos Y$ be a set-valued map and $f:Y\times Z\to\R$ be a function. If $f$ is pseudo-continuous and $T$ is continuous with compact and non-empty values; then the map $M:X\times Z\tos Y$ defined as
\[
M(x,z)=\{y\in T(x):~f(y,z)\leq f(w,z)\mbox{ for all }w\in T(x)\}
\]
is upper semicontinuous with compact and non-empty values.
\end{proposition}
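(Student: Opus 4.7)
My plan is to adapt the classical Berge maximum theorem to the pseudo-continuity setting, following its standard three-step structure (non-emptiness, compactness, upper semicontinuity of the argmin) while replacing every invocation of (semi)continuity of $f$ by the weaker pseudo-continuity of \cite{MORGAN2007}.

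For fixed $(x,z)$, the slice $y \mapsto f(y,z)$ is lower pseudo-continuous, and a Weierstrass-type argument in the pseudo-continuity framework ensures attainment of the infimum on the compact set $T(x)$, giving $M(x,z) \neq \emptyset$. Writing $v(x,z) := \min_{y \in T(x)} f(y,z)$, the fact that this value is attained places it in the range of $f(\cdot, z)$, so lower pseudo-continuity makes $\{y \in T(x) : f(y,z) > v(x,z)\}$ open in $T(x)$; its complement $M(x,z)$ is therefore closed in the compact set $T(x)$, and hence compact.

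For upper semicontinuity I would run a closed-graph argument. Take a net $(x_\alpha, z_\alpha, y_\alpha) \to (x,z,y)$ with $y_\alpha \in M(x_\alpha, z_\alpha)$; upper semicontinuity of $T$ together with compactness of its values yields $y \in T(x)$. Assume, for contradiction, that $y \notin M(x,z)$, and pick $w \in T(x)$ with $f(w,z) < f(y,z)$. Lower semicontinuity of $T$ at $x$ supplies $w_\alpha \in T(x_\alpha)$ with $w_\alpha \to w$, and minimality of $y_\alpha$ gives $f(y_\alpha, z_\alpha) \leq f(w_\alpha, z_\alpha)$. Upper pseudo-continuity of $f$ at $(w,z)$ (with comparison $(y,z)$) and lower pseudo-continuity at $(y,z)$ (with comparison $(w,z)$) then produce, for $\alpha$ large, the sandwich
\[
f(w,z) \,<\, f(y_\alpha, z_\alpha) \,\leq\, f(w_\alpha, z_\alpha) \,<\, f(y,z).
\]
Once the graph of $M$ is shown to be closed, upper semicontinuity follows from the standard fact that a closed-graph map contained in an upper semicontinuous compact-valued map is itself upper semicontinuous.

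The main obstacle is turning this sandwich into a genuine contradiction, because pseudo-continuity does not deliver $\lim_\alpha f(y_\alpha, z_\alpha) = f(y,z)$. My plan is to first establish that the value function $v$ is itself pseudo-continuous under the present hypotheses (upper pseudo-continuity of $v$ comes from lower semicontinuity of $T$ applied at any minimiser of $v(x,z)$; lower pseudo-continuity of $v$ comes from upper semicontinuity of $T$ together with compactness of $T(x)$ to build a finite subcover controlling $f$ near $T(x) \times \{z\}$), and then to exploit the identity $v(x_\alpha, z_\alpha) = f(y_\alpha, z_\alpha)$ via cofinal applications of lower pseudo-continuity at $(y,z)$ against comparison points $(y_{\alpha_0}, z_{\alpha_0})$: this forces the net $f(y_\alpha, z_\alpha)$ to be eventually monotone, to converge, and to have a limit incompatible with both the sandwich above and the strict inequality $v(x,z) < f(y,z)$ that was assumed. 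If this route proves too delicate, I would instead invoke the pseudo-continuous generalisation of Berge's theorem recorded in \cite{MORGAN2007} as a black box.
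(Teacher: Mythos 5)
Your declared fallback is, in fact, the paper's entire proof: the authors set $\hat T(x,z)=T(x)$ and $\hat f(x,z,y)=f(y,z)$ and invoke the pseudo-continuous maximum theorem (Theorem 3.4 in \cite{Co21}, which extends \cite{MORGAN2007}) as a black box. So if citing that result is admissible, you are done; your primary route is really an attempt to reprove it, and it is there that a genuine gap remains.

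The gap is exactly where you flag it, and the proposed repair does not close it. The sandwich $f(w,z)<f(y_\alpha,z_\alpha)\le f(w_\alpha,z_\alpha)<f(y,z)$ is internally consistent, and the ``eventually monotone'' manoeuvre only shows that the net $f(y_\alpha,z_\alpha)$ converges to some $L\in[f(w,z),f(y,z)]$; pseudo-continuity of the value function $v$ cannot then pin $L$ down, because pseudo-continuity never transfers numerical limits, only strict order comparisons against values actually attained by $f$ --- so nothing contradicts $v(x,z)<f(y,z)$. (Your sketch of lower pseudo-continuity of $v$ via a finite subcover has the same problem: sets of the form $\{f<c\}$ need not be open unless $c$ is an attained value.) The standard way to finish is a case split on intermediate values. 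Let $w^*$ minimize $f(\cdot,z)$ on $T(x)$, so $f(w^*,z)<f(y,z)$, and take $w_\alpha\in T(x_\alpha)$ with $w_\alpha\to w^*$. If some point $u$ of the domain satisfies $f(w^*,z)<f(u)<f(y,z)$, then upper pseudo-continuity at $(w^*,z)$ with comparison $u$ gives $f(y_\alpha,z_\alpha)\le f(w_\alpha,z_\alpha)<f(u)$ eventually, while lower pseudo-continuity at $(y,z)$ with comparison $u$ gives $f(y_\alpha,z_\alpha)>f(u)$ eventually: contradiction. If no value of $f$ lies strictly between $f(w^*,z)$ and $f(y,z)$, then upper pseudo-continuity at $(w^*,z)$ with comparison $(y,z)$ plus minimality give $f(y_\alpha,z_\alpha)<f(y,z)$ eventually, hence $f(y_\alpha,z_\alpha)\le f(w^*,z)$, whereas lower pseudo-continuity at $(y,z)$ with comparison $(w^*,z)$ gives $f(y_\alpha,z_\alpha)>f(w^*,z)$: contradiction. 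With that insertion your closed-graph argument goes through; the nonemptiness and compactness steps are fine as written.
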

\begin{proof}
By considering $\hat{T}:X\times Z\tos Y$ and $\hat{f}:(X\times Z)\times Y\to\R$ defined as
\[
\hat{T}(x,z)=T(x)\mbox{ and }\hat{f}(x,z,y)=f(y,z).
\]
Clearly $\hat{f}$ is a pseudo-continuous, and $\hat{T}$ is continuous with compact and non-empty values. Thus, by Theorem 3.4 in \cite{Co21}, the map $M$ is upper semicontinuous with compact and non-empty values. 
\end{proof}

Now, we are in position to state our first existence result, which generalizes Theorem 4.2 in \cite{ASV-2016}. 
\begin{theorem}\label{MS}
Assume any norm in $\R^n$, and moreover for each player $\nu\in N$:
\begin{enumerate}
\item $K_\nu$ is convex, compact and non-empty subset of $\R^{n_\nu}$,
\item $X_\nu$ is continuous with convex, compact and non-empty values, 
\item $\theta_\nu$ is pseudo-continuous and
\item $\theta_\nu(\cdot,x^{-\nu})$ is quasi-convex, for all $x^{-\nu}$;
\end{enumerate}
then there exists a projected solution.
\end{theorem}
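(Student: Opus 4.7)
The plan is to reformulate the search for a projected solution as a fixed point problem on a convex compact set and then apply the Kakutani--Fan--Glicksberg theorem. Since each $X_\nu$ is upper semicontinuous with compact values on the compact set $K$, the image $X_\nu(K)$ is compact in $\R^{n_\nu}$, hence $D_\nu:=\overline{\co}(X_\nu(K))$ is convex and compact. Setting $D:=\prod_{\nu\in N}D_\nu$, every $\mathcal{X}(x)$ is a subset of $D$, and a projected-solution witness $(\hat x,\hat y)$ will be sought in the convex compact set $K\times D$.

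For each player $\nu$ I would define the best-response map $B_\nu:K\times D\tos\R^{n_\nu}$ by
\[
B_\nu(x,y)=\{z^\nu\in X_\nu(x):\theta_\nu(z^\nu,y^{-\nu})\le\theta_\nu(w,y^{-\nu})\text{ for all }w\in X_\nu(x)\}.
\]
By Proposition~\ref{ApplyBerge}, applied with $T=X_\nu$ (continuous and nonempty compact-valued) and $f=\theta_\nu$ (pseudo-continuous), the map $B_\nu$ is upper semicontinuous with nonempty compact values; quasi-convexity of $\theta_\nu(\cdot,y^{-\nu})$ on the convex set $X_\nu(x)$ makes each value convex too. The product $B(x,y):=\prod_\nu B_\nu(x,y)$ is then an upper semicontinuous map from $K\times D$ into $D$ with nonempty, convex, compact values. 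Separately, the metric projection $P_K:D\tos K$ is nonempty-, convex- and compact-valued (it returns the minimizers of the continuous convex function $w\mapsto\|y-w\|$ on the convex compact $K$), and it is upper semicontinuous by the maximum theorem applied to this parametrized minimization.

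Putting these ingredients together, the set-valued map $\Phi:K\times D\tos K\times D$ defined by $\Phi(x,y)=P_K(y)\times B(x,y)$ is upper semicontinuous with nonempty, convex, compact values on the convex compact set $K\times D$. The Kakutani--Fan--Glicksberg theorem then yields a fixed point $(\hat x,\hat y)\in\Phi(\hat x,\hat y)$. This fixed point satisfies $\hat y\in\mathcal{X}(\hat x)$, $\hat x\in P_K(\hat y)$, and for every $\nu$ the component $\hat y^\nu$ minimizes $\theta_\nu(\cdot,\hat y^{-\nu})$ on $X_\nu(\hat x)$, so $\hat y\in\nep(\{\theta_\nu,X_\nu(\hat x)\}_{\nu\in N})$; hence $\hat x$ is a projected solution. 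The step I expect to require the most care is checking that $P_K$ is simultaneously convex-valued and upper semicontinuous under an arbitrary (not necessarily strictly convex) norm, since the projection is not single-valued in general; once that technicality is settled, the rest is a standard product-and-fixed-point argument that leans on Proposition~\ref{ApplyBerge} to accommodate the weak pseudo-continuity of the costs and on quasi-convexity to secure the convex values needed by Kakutani--Fan--Glicksberg.
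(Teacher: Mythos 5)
Your argument is correct, and its main ingredients coincide with the paper's: the same compact convex set $D=\prod_\nu\co(X_\nu(K))$, the same best-response maps justified by Proposition~\ref{ApplyBerge} (upper semicontinuity and nonempty compact values from pseudo-continuity and continuity of $X_\nu$, convex values from quasi-convexity), and the same use of the metric projection $P_K$, which is indeed nonempty-, convex- and compact-valued and upper semicontinuous for an arbitrary norm (the paper cites Deutsch, Pollul and Singer for this; your Berge-type justification also works, since the minimizer set of the convex function $w\mapsto\|y-w\|$ on the convex compact $K$ is convex). The one genuine difference is the fixed-point step. The paper composes $M\circ R:D\tos D$ with $R(y)=P_K(y)\times\{y\}$; since a composition of Kakutani maps need not have convex values, it must invoke Lassonde's fixed point theorem for Kakutani-factorizable multifunctions. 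You instead form the product map $\Phi(x,y)=P_K(y)\times B(x,y)$ on $K\times D$, whose values are products of convex compact sets and hence convex, so the classical Kakutani theorem suffices. Your variant is slightly more elementary (it avoids an external fixed-point result) at the mild cost of enlarging the domain from $D$ to $K\times D$; both yield exactly the same fixed-point conditions $\hat x\in P_K(\hat y)$ and $\hat y\in\nep(\{\theta_\nu,X_\nu(\hat x)\}_{\nu\in N})$, i.e.\ a projected solution.
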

\begin{proof}
The projection map $P_K$ is upper semicontinuous with compact, convex and non-empty values, see \cite{Deutsch1973}. 

For each $\nu\in N$, consider the sets $D_\nu=\co(X_\nu(K))$. In addition, we also consider  the sets $D=\prod_{\nu\in N} D_\nu$ and 
$C=\co(P_K(D))$, which are convex, compact and non-empty.
For each $\nu\in N$, we define the map $M_\nu:C\times D\tos D_\nu$ as
\[
M_\nu(x,y)=\{z^\nu\in X_\nu(x):~\theta_\nu(z^\nu,y^{-\nu})\leq \theta_\nu(w^\nu,y^{-\nu})\mbox{ for all }w^\nu\in X_\nu(x)\},
\] 
which is upper semicontinuous with compact and non-empty values, due to Proposition \ref{ApplyBerge}. Moreover, $M_\nu$ is convex-valued because $\theta_\nu$ is quasiconvex concerning to its player's variable. Thus, the map $M:C\times D\tos D$ defined as
\[
M(x,y)=\prod_{\nu\in N}M_\nu(x,y)
\]
is upper semicontinuous with convex, compact and non-empty values. On the other hand,
by considering the map $R:D\tos C\times D$ defined as
\[
R(y)=P_K(y)\times \{y\},
\]
which is clearly upper semicontinuous with convex, compact and non-empty values. Consequently the map $M\circ R:D\tos D$ is Kakutani's factorizable, and by Lassonde's fixed point theorem \cite{Lassonde}, that means there exists $\hat{y}\in D$ such that $\hat{y}\in M\circ R(\hat{y})$. Thus, there exists $\hat{x}\in C$ such that $\hat{x}\in P_K(\hat{y})$ and
$\hat{y}\in M(\hat{x},\hat{y})$. Now, $\hat{y}\in M(\hat{x},\hat{y})$ if, and only if, for each $\nu$, we have
\[
\theta_\nu(\hat{y})\leq \theta_\nu(w^\nu,\hat{y}^{-\nu})\mbox{ for all }w^\nu\in X_\nu(\hat{x}).
\]
Therefore, $\hat{x}$ is a projected solution.
\end{proof}

As a direct consequence of the previous result we have the following corollary, which is slight modification of Arrow and Debreu result \cite{Arrow-Debreu}.
\begin{corollary}\label{cor-PS}
Assume that for each player $\nu\in N$:
\begin{enumerate}
\item $K_\nu$ is convex, compact and non-empty subset of $\R^{n_\nu}$,
\item $X_\nu:K\tos K_\nu$ is continuous with convex, compact and non-empty values, 
\item $\theta_\nu$ is pseudo-continuous and
\item $\theta_\nu(\cdot,x^{-\nu})$ is quasi-convex, for all $x^{-\nu}$;
\end{enumerate}
then the set $\gnep(\{\theta_\nu,X_\nu\}_{\nu\in N})$ is non-empty.

\end{corollary}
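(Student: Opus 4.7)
The plan is to deduce the corollary as a direct consequence of Theorem \ref{MS}. The four hypotheses here are identical to those of Theorem \ref{MS}; the only structural difference is that the constraint maps satisfy $X_\nu(x)\subset K_\nu$ for every $x\in K$. In particular $X_\nu:K\tos \R^{n_\nu}$ is still continuous with convex, compact and non-empty values, so Theorem \ref{MS} applies (pick any norm on $\R^n$, e.g.\ the Euclidean one).

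Applying Theorem \ref{MS} produces a projected solution: a pair $(\hat{x},\hat{y})$ with $\hat{y}\in \mathcal{X}(\hat{x})$, $\hat{x}\in P_K(\hat{y})$, and $\hat{y}\in \nep(\{\theta_\nu,X_\nu(\hat{x})\}_{\nu\in N})$. The key observation is now that under the present hypotheses $\hat{y}^\nu \in X_\nu(\hat{x})\subset K_\nu$ for each $\nu$, so $\hat{y}\in \prod_{\nu\in N}K_\nu = K$. Since $\hat{y}$ already lies in $K$, one has $\|\hat{y}-\hat{y}\|=0\leq \|\hat{y}-x\|$ for every $x\in K$, and (using $\|v\|=0\iff v=0$) no other point of $K$ can realize distance zero; hence $P_K(\hat{y})=\{\hat{y}\}$, forcing $\hat{x}=\hat{y}$.

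Consequently, $\hat{x}\in \nep(\{\theta_\nu,X_\nu(\hat{x})\}_{\nu\in N})$, which by the equivalence noted immediately after the definition of GNEP (namely, $\hat{x}\in \gnep(\{\theta_\nu,X_\nu\}_{\nu\in N})$ iff $\hat{x}\in \nep(\{\theta_\nu,X_\nu(\hat{x})\}_{\nu\in N})$) means $\hat{x}\in \gnep(\{\theta_\nu,X_\nu\}_{\nu\in N})$, so this set is non-empty. There is no real obstacle in the argument: all the technical work is already done in Theorem \ref{MS}, and the corollary is essentially the remark that the projection step becomes vacuous when each $X_\nu$ takes values inside $K_\nu$.
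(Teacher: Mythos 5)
Your argument is correct and is precisely the intended one: the paper states the corollary as a direct consequence of Theorem \ref{MS} without spelling out the details, and your observation that $\hat{y}\in\mathcal{X}(\hat{x})\subset K$ forces $P_K(\hat{y})=\{\hat{y}\}$ and hence $\hat{x}=\hat{y}$ is exactly the step that makes the deduction work. Nothing further is needed.
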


The following example says that Theorem \ref{MS} is not a direct consequence of the one given by Bueno and Cotrina in \cite{BC-2021}.
\begin{example}
Consider $K_1=K_2=[0,1]$ and the maps $X_1,X_2:K\tos  \R$ defined as
\[
K_1(x,y)=[x+1,y+2]\mbox{ and }K_2(x,y)=[y+1,x+2].
\]
Consequently, the map $\mathcal{X}:[0,1]^2\tos\R^2$ is given by
\[
\mathcal{X}(x,y)=[x+1,y+2]\times[y+1,x+2].
\]
It is clear that $\mathcal{X}$ is not a self-map. Moreover, it does not have fixed point and consequently the GNEP associated to any two functions does not have solutions.

On the other hand, consider the functions 
$\theta_1,\theta_2:\R^2\to\R$ defined as
\[
\theta_1(x,y)=x^3-y\mbox{ and }\theta_2(x,y)=x+y^3.
\]
We define the maps $M_1,M_2:[0,1]^2\tos\R^2$ as
\[
M_1(x,y)=\{z\in [x+1,y+2]:~ \theta_1(z,y)\leq \theta_2(w,y),\mbox{ for all }w\in [x+1,y+2]\}
\]
and
\[
M_2(x,y)=\{z\in [y+1,x+2]:~ \theta_2(x,z)\leq \theta_2(x,w),\mbox{ for all }w\in [y+1,x+2]\}.
\]
Clearly $M_1(x,y)=\{x+1\}$ and $M_2(x,y)=\{y+1\}$. Thus, by considering the Euclidean norm in $\R^2$, we can see that $(1,1)$ is the only projected solution for the GNEP. Theorem \ref{MS} guarantees the existence of such a projected solution, contrary to Theorem 3.1 in \cite{BC-2021}, because the constraint maps are also dependent on its own player's strategy. Moreover, we cannot apply Theorem 9 in \cite{CZ-2019}.
\end{example}

Using the same idea proposed by Castellani \emph{et al.} in \cite{CGL-2023}, we can relax the compactness of each strategy set $K_\nu$. However, we need to consider a particular norm.

\begin{theorem}\label{MS2}
Assume the Euclidean norm in $\R^n$, and moreover for each player $\nu\in N$:
\begin{enumerate}
\item $K_\nu$ is convex, closed and non-empty subset of $\R^{n_\nu}$,
\item $X_\nu$ is continuous with convex, compact and non-empty values, 
\item $X_\nu(K)$ is bounded,
\item $\theta_\nu$ is pseudo-continuous and
\item $\theta_\nu(\cdot,x^{-\nu})$ is quasi-convex, for all $x^{-\nu}$;
\end{enumerate}
then there exists a projected solution.
\end{theorem}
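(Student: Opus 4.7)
The plan is to mimic the proof of Theorem \ref{MS}, but to build the compact auxiliary sets $D$ and $C$ from the boundedness assumption on $X_\nu(K)$ rather than from compactness of the $K_\nu$, and to exploit that the Euclidean projection onto a closed convex set of $\R^n$ is single-valued and non-expansive, hence continuous. First I would set $D_\nu=\overline{\co(X_\nu(K))}$ for every $\nu\in N$; since $X_\nu(K)$ is bounded by hypothesis (3), $D_\nu$ is convex and compact. Then $D=\prod_{\nu\in N}D_\nu$ is convex and compact, and by construction $X_\nu(x)\subset D_\nu$ for every $x\in K$.

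Next, since $K=\prod_{\nu\in N}K_\nu$ is closed and convex, the Euclidean projection $P_K:\R^n\to K$ is a single-valued continuous map (in fact non-expansive). Consequently $P_K(D)$ is compact, and Carath\'eodory's theorem guarantees that $C:=\co(P_K(D))$ is also compact; moreover $C\subset K$ because $K$ is convex and contains $P_K(D)$. With $C$ and $D$ in hand, I would repeat verbatim the construction used in Theorem \ref{MS}: define $M_\nu:C\times D\tos D_\nu$ by
\[
M_\nu(x,y)=\{z^\nu\in X_\nu(x):\theta_\nu(z^\nu,y^{-\nu})\leq\theta_\nu(w^\nu,y^{-\nu})\text{ for all }w^\nu\in X_\nu(x)\},
\]
which by Proposition \ref{ApplyBerge} (applied thanks to (2) and (4)) is upper semicontinuous with compact non-empty values, and convex-valued by the quasi-convexity assumption (5). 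Thus $M=\prod_{\nu\in N}M_\nu:C\times D\tos D$ is upper semicontinuous with convex compact non-empty values.

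I would then introduce the map $R:D\to C\times D$ given by $R(y)=(P_K(y),y)$, which is a continuous single-valued map because $P_K$ is continuous and $P_K(D)\subset C$. The composition $M\circ R:D\tos D$ is therefore Kakutani-factorisable, and Lassonde's fixed point theorem yields $\hat y\in D$ with $\hat y\in M(P_K(\hat y),\hat y)$. Setting $\hat x=P_K(\hat y)\in C\subset K$, the definition of $M$ gives $\hat y^\nu\in X_\nu(\hat x)$ and $\theta_\nu(\hat y)\leq\theta_\nu(w^\nu,\hat y^{-\nu})$ for every $w^\nu\in X_\nu(\hat x)$ and every $\nu$, so that $\hat y\in\nep(\{\theta_\nu,X_\nu(\hat x)\}_{\nu\in N})$; together with $\hat x\in P_K(\hat y)$ this says exactly that $\hat x$ is a projected solution.

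The delicate point, and the reason the theorem forces the Euclidean norm, is keeping the auxiliary set $C=\co(P_K(D))$ compact once $K$ is no longer compact: in a general norm $P_K$ could be genuinely set-valued and need not be continuous on a merely closed convex $K$, so $P_K(D)$ could fail to be compact and Carath\'eodory's argument would break down. The Euclidean (Hilbertian) structure gives continuity and single-valuedness of $P_K$ for free, which is precisely what lets the rest of the Theorem \ref{MS} machinery go through unchanged.
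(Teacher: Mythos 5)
Your proposal is correct and follows essentially the same route as the paper: reuse the sets $D$, $C$ and the map $M$ from Theorem \ref{MS}, with compactness of $D$ now coming from the boundedness of $X_\nu(K)$ and with the single-valuedness and continuity of the Euclidean projection $P_K$ replacing the upper semicontinuity argument (the paper applies Kakutani directly to $y\mapsto M(P_K(y),y)$, while you invoke Lassonde on $M\circ R$ — an interchangeable step since $P_K$ is single-valued). Your explicit use of $\overline{\co(X_\nu(K))}$ is in fact a slightly more careful choice than the paper's, since $X_\nu(K)$ is only assumed bounded and need not be closed when $K$ is unbounded.
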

\begin{proof}
The projection map $P_K$ is single-valued and consequently  it is continuous, see \cite{Deutsch1973}. 
By considering the sets $D$ and $C$, and the map $M$ in the proof of Theorem \ref{MS}.
Now we define the map $S:D\tos D$ as
\[
S(y)=M(P_K(y),y)
\]
which is clearly upper semicontinuous, due to Theorem in \cite{aliprantis06}. Moreover, it has convex, compact and non-empty values. Consequently, by Kakutani's theorem there exists a fixed point of $S$. Thus, it is enough to show this fixed point produces a projected solution. Indeed, let $\hat{y}$ be a fixed point of $S$. Thus, $\hat{y}^\nu\in M_\nu(\hat{x},\hat{y})$ for all $\nu\in N$, where $\hat{x}=P_K(\hat{y})$. This is equivalent to $\hat{y}\in\nep(\{\theta_\nu,X_\nu(\hat{x})\}_{\nu\in N})$. Therefore, $\hat{x}$ is a projected solution.
\end{proof}
\begin{remark}
In the above result, we can consider any norm such that the projection map $P_K$ is single-valued and continuous. 
\end{remark}
\subsection{The jointly convex case}

An important instance of generalized Nash equilibrium problem was presented by Rosen in \cite{Rosen}. More specifically, given  a convex and non-empty subset $X$ of $\R^n$, the aim of player $\nu\in N$ is to find $x^\nu$, given the strategy of rival players $x^{-\nu}$, such that it solves the problem
\begin{align}\label{RGNEP}
\min_{x^\nu} \theta_\nu(x^\nu,x^{-\nu}),~\mbox{ subject to }~(x^\nu,x^{-\nu})\in X.
\end{align}
A vector $\hat{x}\in X$ is a \emph{generalized Nash equilibrium in the sense of Rosen} if, for each player $\nu\in N$, $\hat{x}^{\nu}$ is a solution of the problem \eqref{RGNEP} associated to $\hat{x}^{-\nu}$.

\,

After the seminal paper of Rosen \cite{Rosen}, the authors in \cite{Aussel-Dutta} extended his existence result to the case of semi strict quasi-convexity,  Bueno \emph{et al.} \cite{BCC} dealt with the quasi-convexity case, and recently Calder\'on and Cotrina in \cite{CCJC2023} consider the noncompact case.

\,

Now, for each player $\nu\in N$, we consider the set $K_{\nu}$ as the projection of $X$ onto $\R^{n_\nu}$. Additionaly, for each $x\in X$ we consider the set
\[
X_\nu(x):=\{y^\nu\in\R^{n_\nu}:~(y^\nu,x^{-\nu})\in X\}.
\]
Thus, each $X_\nu$ is defined on $X$ onto $K_\nu$. 
This allows us to define the map 
$\mathcal{X}:X\tos\R^n$ by
\[
\mathcal{X}(x)=\prod_{\nu\in I_p}X_\nu(x),
\]
which is not a self-map in general. 
 Thus, a natural question arises: is a classical solution any projected solution?
Or in other words, we want to know if there exist $\hat{x}\in X$ and $\hat{y}\in \mathcal{X}(\hat{x})\setminus \{\hat{x}\}$ such that
\begin{itemize}
\item $\hat{x}$ is the projection of $\hat{y}$ on $X$, and
\item $\hat{y}\in \nep(\{\theta_\nu,X_\nu(\hat{x})\}_{\nu\in N})$.
\end{itemize}

We are ready for our main result of this subsection, which gives a positive answer to our question.

\begin{proposition}
Let $\|\cdot\|$ be a norm in  $\R^n$  and $p$ be the number of players. Then any projected solution is a classical solution.
\end{proposition}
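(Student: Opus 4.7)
The plan is to show that the auxiliary point $\hat{y}$ in any projected solution $(\hat{x},\hat{y})$ must actually coincide with $\hat{x}$; once that is known, the Nash equilibrium property of $\hat{y}$ on the restricted game becomes verbatim the Rosen equilibrium condition for $\hat{x}$. The number of players $p$ enters through one convex combination.

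Fix a projected solution: $\hat{x}\in X$ is a projection of $\hat{y}$ onto $X$, $\hat{y}\in \mathcal{X}(\hat{x})$, and $\hat{y}\in \nep(\{\theta_\nu,X_\nu(\hat{x})\}_{\nu\in N})$. The key observation is that $\hat{y}^\nu\in X_\nu(\hat{x})$ translates into $(\hat{y}^\nu,\hat{x}^{-\nu})\in X$ for every $\nu\in N$, producing $p$ distinguished points inside the convex set $X$. Averaging them yields
\[
w := \frac{1}{p}\sum_{\nu\in N}(\hat{y}^\nu,\hat{x}^{-\nu})\in X,
\]
whose $\mu$-th block is $w^\mu = \frac{1}{p}\hat{y}^\mu + \frac{p-1}{p}\hat{x}^\mu$, so that $\hat{y}-w = \frac{p-1}{p}(\hat{y}-\hat{x})$.

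Since $\hat{x}$ minimises $\|\hat{y}-\cdot\|$ over $X$, we obtain $\|\hat{y}-\hat{x}\|\le \|\hat{y}-w\| = \frac{p-1}{p}\|\hat{y}-\hat{x}\|$, which forces $\hat{y}=\hat{x}$ whenever $p\ge 2$; the single-player case is trivial because $\mathcal{X}(\hat{x})\subset X$ already. With $\hat{y}=\hat{x}$ in hand, the Nash condition for $\hat{y}$ against the fixed strategy sets $X_\nu(\hat{x})=X_\nu(\hat{y})$ is literally the Rosen equilibrium condition for $\hat{x}$, which is what we want. I do not foresee any serious obstacle: the only non-routine ingredient is spotting the uniform average $w$, after which the projection inequality does all the work.
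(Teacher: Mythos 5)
Your proof is correct and follows essentially the same route as the paper: both exploit that the $p$ unilateral-deviation points $(\hat{y}^\nu,\hat{x}^{-\nu})$ lie in the convex set $X$, form a convex combination of them lying strictly closer to $\hat{y}$ than $\hat{x}$ is, and contradict the projection inequality. Your uniform average $w$ is exactly the paper's point $z_t$ at the endpoint $t=\tfrac{p-1}{p}$, so the only difference is that you use a single well-chosen point where the paper uses a one-parameter family.
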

\begin{proof}
Let $\hat{x}\in X$ be a projected solution, that means there exists $\hat{y}\in \mathcal{X}(\hat{x})$ such that
\begin{itemize}
\item $\|\hat{y}-\hat{x}\|\leq \|\hat{y}-x\|$, for all $x\in X$ and
\item $\theta_\nu(\hat{y})\leq \theta_\nu(y^\nu,\hat{y}^{-\nu})$ for all $y^\nu\in X_\nu(\hat{x})$.
\end{itemize}
First, notice that 
\[
\hat{y}=\sum_{\nu=1}^p\left((\hat{y}^\nu,\hat{x}^{-\nu})-(0,\hat{x}^{-\nu})\right)
\]
Now, for any $t\in\R$ we have
\begin{align*}
t\hat{x}+(1-t)\hat{y}&=t\hat{x}+(1-t)\sum_{\nu=1}^p\left((\hat{y}^\nu,\hat{x}^{-\nu})-(0,\hat{x}^{-\nu})\right)\\
&=t\hat{x}+(1-t)\sum_{\nu=1}^p(\hat{y}^\nu,\hat{x}^{-\nu})-(1-t)(p-1)\hat{x}\\
&=(t-(1-t)(p-1))\hat{x}+(1-t)\sum_{\nu=1}^p(\hat{y}^\nu,\hat{x}^{-\nu}).
\end{align*}
Clearly $t-(1-t)(p-1)+\sum_{\nu=1}^p(1-t)=1$. Thus, for any $1>t>\frac{p-1}{p}>0$, we deduce that
$z_t=t\hat{x}+(1-t)\hat{y}\in X$, due to this point belongs to the convex hull of $\hat{x}, (\hat{y}^1,\hat{x}^{-1}),\dots,(\hat{y}^p,\hat{x}^{-p})\in X.$ Consequently
\[
\|\hat{y}-z_t\|\leq  t\|\hat{y}-\hat{x}\|
\]
which in turn implies $\|\hat{y}-\hat{x}\|=0$. Hence, $\hat{y}=\hat{x}$.
\end{proof}

\subsection{Equivalence between Nash equilibrium theorems}

It was showed in \cite{BC-2021} the existence of projected solutions for GNEPs which are not generalized Nash equilibria. However, we will show that  the problem of finding projected solutions for GNEPs can be associated to a particular GNEP by adding a new player.

Assume that $N=\{1,2,\cdots,p\}$ and consider $M=N\cup\{p+1\}$. Thus, for each $\nu\in M$ we consider the sets $\hat{K}_\nu$ defined by
\[
\hat{K}_\nu=\begin{cases}
\co(K_\nu\cup X_\nu(K)),&\mbox{if }\nu\in N; \\
K,& \mbox{if }\nu=p+1 
\end{cases}
\]
As usual we write ${\bf x}=({\bf x}^\nu,{\bf x}^{-\nu})\in \hat{K}=\prod_{\nu\in M}\hat{K}_\nu$ in order to emphasize the strategy of player $\nu$. Moreover, we write ${\bf x}_0$ instead ${\bf x}^{-(p+1)}$. It is important to notice that for each $\nu\in N$
\[
{\bf x}^\nu={\bf x}_0^{\nu}.
\]
Let us define the map $\hat{X}_\nu:\hat{K}\tos\hat{K}_\nu$ and the function $\hat{\theta}_\nu:\R^n\times\R^n\to\R$, respectively, as
\[
\hat{X}_\nu({\bf x})=\begin{cases}
X_\nu({\bf x}^{p+1}),&\mbox{if }\nu\in N\\
K,&\mbox{if }\nu=p+1
\end{cases}
\mbox{ and }
\hat{\theta}_\nu({\bf x})=\begin{cases}
\theta_\nu({\bf x}_0),&\mbox{if }\nu\in N\\
\|{\bf x}_0-{\bf x}^{p+1}\|,&\mbox{if }\nu=p+1.
\end{cases}
\]
We denote the set of projected solutions by $\psgnep(\{\theta_\nu,X_\nu\}_{\nu\in N})$, and we establish  the relationship between the sets $\gnep(\{\hat{\theta}_\nu,\hat{X}_\nu\}_{\nu\in M})$ and $\psgnep(\{\theta_\nu,X_\nu\}_{\nu\in N})$.
\begin{proposition}\label{reformulation}
The following implications hold:
\begin{enumerate}
\item If $\hat{\bf x}\in \gnep(\{\hat{\theta}_\nu,\hat{X}_\nu\}_{\nu\in M})$, then $\hat{\bf x}^{p+1}\in\psgnep(\{\theta_\nu,X_\nu\}_{\nu\in N})$. 
\item If $\hat{x}\in\psgnep(\{\theta_\nu,X_\nu\}_{\nu\in N})$, then there exists $\hat{y}\in\R^n$ such that the vector $\hat{\bf x}=(\hat{y}, \hat{x})\in \gnep(\{\hat{\theta}_\nu,\hat{X}_\nu\}_{\nu\in M})$.
\end{enumerate}
\end{proposition}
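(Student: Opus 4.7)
The plan is to unfold the definitions of $\hat{X}_\nu$ and $\hat{\theta}_\nu$ and observe that the $(p+1)$-th auxiliary player is tailor-made to encode the projection requirement, while the first $p$ players encode the Nash equilibrium condition with constraint set $X_\nu(\hat{x})$.

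For part (1), given $\hat{\bf x}\in\gnep(\{\hat{\theta}_\nu,\hat{X}_\nu\}_{\nu\in M})$, I would set $\hat{x}:=\hat{\bf x}^{p+1}\in K$ and $\hat{y}:=\hat{\bf x}_0\in\R^n$, and verify the two defining properties of a projected solution. For each $\nu\in N$, the fact that $\hat{\bf x}^\nu$ solves the $\nu$-th optimization problem on $\hat{X}_\nu(\hat{\bf x})=X_\nu(\hat{x})$ with loss $\hat{\theta}_\nu(z^\nu,\hat{\bf x}^{-\nu})=\theta_\nu(z^\nu,\hat{y}^{-\nu})$ translates directly into $\hat{y}^\nu\in X_\nu(\hat{x})$ and the Nash equilibrium inequality for $\theta_\nu$. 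Taking the product over $\nu\in N$ gives both $\hat{y}\in\mathcal{X}(\hat{x})$ and $\hat{y}\in\nep(\{\theta_\nu,X_\nu(\hat{x})\}_{\nu\in N})$. For the new player $p+1$, the equilibrium condition on $\hat{X}_{p+1}(\hat{\bf x})=K$ with loss $\hat{\theta}_{p+1}({\bf x})=\|{\bf x}_0-{\bf x}^{p+1}\|$ says exactly that $\hat{x}\in P_K(\hat{y})$.

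For part (2), conversely, given a projected solution $\hat{x}$ with associated point $\hat{y}\in\mathcal{X}(\hat{x})$, I would define $\hat{\bf x}=(\hat{y},\hat{x})$, i.e.\ $\hat{\bf x}^\nu=\hat{y}^\nu$ for $\nu\in N$ and $\hat{\bf x}^{p+1}=\hat{x}$. The first check is feasibility: $\hat{y}^\nu\in X_\nu(\hat{x})\subset X_\nu(K)\subset\hat{K}_\nu$ and $\hat{x}\in K=\hat{K}_{p+1}$. Next, for each $\nu\in N$ the optimality of $\hat{y}^\nu$ in the Nash equilibrium $\hat{y}\in\nep(\{\theta_\nu,X_\nu(\hat{x})\}_{\nu\in N})$ gives exactly the optimality of $\hat{\bf x}^\nu$ in the $\nu$-th subproblem of the expanded GNEP. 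Finally, the projection property $\hat{x}\in P_K(\hat{y})$ gives the optimality of $\hat{\bf x}^{p+1}$ against $\hat{\theta}_{p+1}$ over $K$.

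There is no real analytic obstacle here; the whole content is bookkeeping between the indexing conventions. The one point that requires a moment of care is the dual role of the coordinate $\hat{\bf x}^\nu$ for $\nu\in N$: it simultaneously is the $\nu$-th strategy in the expanded game and the $\nu$-th component of the composite variable ${\bf x}_0$ appearing inside $\hat{\theta}_\nu$ and $\hat{\theta}_{p+1}$. Once one carefully writes $\hat{\theta}_\nu(z^\nu,\hat{\bf x}^{-\nu})=\theta_\nu(z^\nu,\hat{y}^{-\nu})$ for $\nu\in N$ and $\hat{\theta}_{p+1}(\hat{\bf x}_0,z)=\|\hat{y}-z\|$ for $\nu=p+1$, both implications reduce to a direct comparison of the two lists of inequalities.
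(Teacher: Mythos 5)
Your proposal is correct and follows essentially the same route as the paper's proof: both arguments simply unfold the definitions of $\hat{X}_\nu$ and $\hat{\theta}_\nu$, identify $\hat{y}$ with $\hat{\bf x}_0$ and $\hat{x}$ with $\hat{\bf x}^{p+1}$, and observe that the $(p+1)$-th player's optimality condition is exactly the statement $\hat{x}\in P_K(\hat{y})=\argmin_{K}\|\cdot-\hat{y}\|$. Your write-up is in fact somewhat more explicit than the paper's (which dismisses part 2 for $\nu\in N$ as trivial), but there is no substantive difference in approach.
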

\begin{proof}
\begin{enumerate}
\item  If $\hat{\bf x}\in \gnep(\{\hat{\theta}_\nu,\hat{X}_\nu\}_{\nu\in M})$, then for each $\nu\in M$
\begin{align}\label{Eqv}
\hat{\bf x}^\nu\in \argmin_{\hat{X}_\nu(\hat{\bf x})} \hat{\theta}_\nu(\cdot,\hat{\bf x}^{-\nu})
\end{align}
The previous relation \eqref{Eqv} is equivalent to  
\[
\hat{\bf x}_0^\nu\in \argmin_{X_\nu(\hat{\bf x}^{p+1})}\theta_\nu(\cdot, \hat{\bf x}_0^{-\nu}),\mbox{ for all }\nu\in N;
\]
and for $\nu=p+1$
\[
\|\hat{\bf x}_0-\hat{\bf x}^{p+1}\|\leq \|\hat{\bf x}_0-{\bf x}^{p+1}\|,\mbox{ for all }{\bf x}^{p+1}\in K.
\]
Since $P_K(\hat{\bf x}_0)=\argmin_{K}\|\cdot-\hat{\bf x}_0\|$, this last inequality implies $\hat{\bf x}^{p+1}\in P_K(\hat{\bf x}_0)$. Therefore, $\hat{\bf x}^{p+1}\in\psgnep(\{\theta_\nu,X_\nu\}_{\nu\in N})$. 
\item For $\nu\in N$ is trivial, and for $\nu=p+1$, the result follows from the fact that 
$P_K(\hat{y})=\argmin_{K}\|\cdot-\hat{y}\|$.
\end{enumerate}

\end{proof}

Now, we are in position to state the following result, which states that Theorem \ref{MS} can be deduced from Corollary \ref{cor-PS}.

\begin{theorem}
Corollary \ref{cor-PS} implies Theorem \ref{MS}.
\end{theorem}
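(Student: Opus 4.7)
The plan is to deduce Theorem \ref{MS} from Corollary \ref{cor-PS} by routing through the equivalence established in Proposition \ref{reformulation}. Given a GNEP $\{\theta_\nu, X_\nu\}_{\nu \in N}$ satisfying the hypotheses of Theorem \ref{MS}, I would construct the enlarged game with an extra player $p+1$ exactly as in Section 3.3, verify that this enlarged game meets the hypotheses of Corollary \ref{cor-PS}, extract a generalized Nash equilibrium, and then invoke part 1 of Proposition \ref{reformulation} to recover a projected solution of the original GNEP.

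Concretely, I would take $\hat{K}_\nu = \co(K_\nu \cup X_\nu(K))$ for $\nu \in N$ and $\hat{K}_{p+1} = K$, together with the constraint maps $\hat{X}_\nu$ and payoffs $\hat{\theta}_\nu$ as in the excerpt. The required verifications are as follows. Each $\hat{K}_\nu$ is convex, compact and non-empty: convexity and non-emptiness are immediate, while compactness follows because $X_\nu(K)$ is the image of the compact set $K$ under an upper semicontinuous compact-valued map (hence compact), and the convex hull of a compact subset of $\R^{n_\nu}$ is compact by Carath\'eodory's theorem. For $\nu \in N$ the map $\hat{X}_\nu(\mathbf{x}) = X_\nu(\mathbf{x}^{p+1})$ is continuous (as the composition of the continuous projection $\mathbf{x} \mapsto \mathbf{x}^{p+1}$ with the continuous map $X_\nu$) with convex, compact, non-empty values taking values in $\hat{K}_\nu$ since $X_\nu(K)\subset \hat K_\nu$; for $\nu = p+1$ the constant map $\hat{X}_{p+1}\equiv K$ trivially satisfies the same properties. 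For $\nu \in N$ the payoff $\hat{\theta}_\nu(\mathbf{x}) = \theta_\nu(\mathbf{x}_0)$ inherits pseudo-continuity from $\theta_\nu$, and since $\mathbf{x}^\nu = \mathbf{x}_0^\nu$ its dependence on the $\nu$-th coordinate with the other coordinates fixed is precisely $\theta_\nu(\cdot, \mathbf{x}_0^{-\nu})$, which is quasi-convex by hypothesis. For $\nu = p+1$ the payoff $\hat{\theta}_{p+1}(\mathbf{x}) = \|\mathbf{x}_0 - \mathbf{x}^{p+1}\|$ is continuous and convex in $\mathbf{x}^{p+1}$, hence pseudo-continuous and quasi-convex.

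With these verifications, the hypotheses of Corollary \ref{cor-PS} hold for the enlarged game $\{\hat{\theta}_\nu, \hat{X}_\nu\}_{\nu \in M}$, so the corollary produces some $\hat{\mathbf{x}} \in \gnep(\{\hat{\theta}_\nu, \hat{X}_\nu\}_{\nu \in M})$. Part 1 of Proposition \ref{reformulation} then gives $\hat{\mathbf{x}}^{p+1} \in \psgnep(\{\theta_\nu, X_\nu\}_{\nu \in N})$, which is exactly the conclusion of Theorem \ref{MS}.

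I do not anticipate a substantial obstacle: the argument is a clean reduction once Proposition \ref{reformulation} is available. The only step meriting mild care is the compactness of $\hat{K}_\nu$, which rests on the standard facts that an upper semicontinuous compact-valued map sends compact sets to compact sets and that the convex hull of a compact subset of $\R^n$ is compact; everything else is a matter of unpacking the definitions of the enlarged data and checking that the inherited regularity and quasi-convexity of $\hat{\theta}_\nu$ match the hypotheses of Corollary \ref{cor-PS}.
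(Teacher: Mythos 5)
Your proposal is correct and follows essentially the same route as the paper's own proof: build the enlarged $(p+1)$-player game of Section 3.3, check that $\hat{K}_\nu$, $\hat{X}_\nu$, $\hat{\theta}_\nu$ satisfy the hypotheses of Corollary \ref{cor-PS} (including compactness of $\hat{K}_\nu=\co(K_\nu\cup X_\nu(K))$ via upper semicontinuity of $X_\nu$), and then apply part 1 of Proposition \ref{reformulation}. The only difference is that you spell out the compactness and quasi-convexity verifications in slightly more detail than the paper does.
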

\begin{proof}
For each $\nu\in N$ we have that 
\begin{itemize}
\item the set $\hat{K}_\nu$ is compact, convex and non-empty, due to the set $K_\nu$ is compact and non-empty, and the map $X_\nu$ is upper semicontinuous;
\item the map $\hat{X}_\nu$ is continuous with compact, convex and non-empty values, because the map $X_\nu$ is so;
\item the function $\hat{\theta}_\nu$ is pseudo-continuous and quasiconvex in its own variable, because $\theta_\nu$ is so.
\end{itemize}
For $p+1$, we have that $\hat{K}_{p+1}=K$, which is convex, compact and non-empty, 
the map $\hat{X}_{p+1}$ is constant and consequently it is continuous with convex, compact and non-empty values. Furthermore, since the norm is continuous and convex we obtain that  the function $\hat{\theta}_{p+1}$ is continuous and convex. 
Thus, by Corollary \ref{cor-PS} there exists at least one element of $\gnep(\{\hat{\theta}_\nu,\hat{X}_\nu\}_{\nu\in M})$. Hence, the result follows from Proposition \ref{reformulation}, part {\it 1.}. 
\end{proof}

\section*{Conclusions}
In this manuscript, we improve some existence results on projected solutions for generalized Nash equilibrium problems. We establish that the concept of projected solution coincides with the classical notion of generalized Nash equilibrium for generalized Nash games proposed by Rosen. Finally, we reformulate the problem of finding projected solutions for GNEPs as another GNEP by adding an extra player.
\bibliographystyle{abbrv}


\end{document}